\newtheorem{theorem}{Theorem}
\theoremstyle{plain}
\newtheorem{lemma}{Lemma}
\newtheorem{remark}{Remark}
\numberwithin{equation}{section}
\newtheorem{theoremalph}{Theorem}
\newtheorem{definitionalph}{Definition}
\begin{document}
\title[Some Simpson type inequalities for convex functions]{Some Simpson type inequalities for $h-$convex and $(\alpha,m)-$convex functions}
\author{Wenjun Liu}
\address{College of Mathematics and Statistics, Nanjing University of
Information Science and Technology, Nanjing 210044, China}
\email{wjliu.cn@gmail.com}
\subjclass[2000]{26A51, 26D07, 26D10, 26D15.}
\keywords{Simpson type inequality, $h-$convex function,   $(\alpha,m)-$convex function.}

\begin{abstract}
In this paper, we establish some Simpson type inequalities for
functions whose third derivatives in the absolute value are $h-$convex and $(\alpha,m)-$convex, respectively.

\end{abstract}

\maketitle

\section{Introduction}

The following inequality is well known in the literature as Simpson's inequality:
\begin{equation}\label{e1}
\left|\int_{a}^{b}f(t)dt-\frac{b-a}{3}\left[\frac{f(a)+f(b)}{2}+2f\left(\frac{a+b}{2}\right)\right]\right|
\leq\frac{1}{2880}\|f^{(4)}\|_{\infty}(b-a)^5,
\end{equation}
where the mapping $f : [a, b] \rightarrow\mathbb{R}$ is supposed to be four time differentiable on the interval
$(a,b)$  and having the fourth derivative bounded on $(a,b)$, that is $\|f^{(4)}\|_{\infty}= {\sup_{x\in(a,b)}}|f^{(4)}(x)|
$ $ <\infty$.
This inequality gives an error bound for the classical
Simpson quadrature formula, which, actually, is one of the most used quadrature
formulae in practical applications.
In recent years, such inequalities were studied
extensively by many researchers and numerious generalizations, extensions
and variants of them appeared in a number of papers (see \cite{a20111,d1999s,d2000s,hn2010,l2005,l2011,sso2010,tyd2007}).

Let us recall definitions of some  kinds of convexity as follows.

\begin{definitionalph}
 \textit{[\ref{god}] We say that }$f:I\rightarrow
\mathbb{R}
$\textit{\ is Godunova-Levin function or that }$f$\textit{\ belongs to the
class }$Q\left( I\right) $\textit{\ if }$f$\textit{\ is non-negative and for
all }$x,y\in I$\textit{\ and }$t\in \left( 0,1\right) $\textit{\ we have \ \
\ \ \ \ \ \ \ \ \ \ \ }%
\begin{equation}
f\left( tx+\left( 1-t\right) y\right) \leq \frac{f\left( x\right) }{t}+\frac{%
f\left( y\right) }{1-t}.  \label{103}
\end{equation}
\end{definitionalph}

\begin{definitionalph}
\textit{[\ref{dr1}] We say that }$f:I\subseteq
\mathbb{R}
\rightarrow
\mathbb{R}
$\textit{\ is a }$P-$\textit{function or that }$f$\textit{\ belongs to the
class }$P\left( I\right) $\textit{\ if }$f$\textit{\ is non-negative and for
all }$x,y\in I$\textit{\ and }$t\in \left[ 0,1\right]$\textit{\ we have}%
\begin{equation}
f\left( tx+\left( 1-t\right) y\right) \leq f\left( x\right) +f\left(
y\right).  \label{104}
\end{equation}
\end{definitionalph}

\begin{definitionalph}
\textit{[\ref{hud}] Let }$s\in \left( 0,1\right] .$\textit{\ A function }$%
f:\left( 0,\infty \right] \rightarrow \left[ 0,\infty \right] $\textit{\ is
said to be }$s-$\textit{convex in the second sense if \ \ \ \ \ \ \ \ \ \ \
\ }%
\begin{equation}
f\left( tx+\left( 1-t\right) y\right) \leq t^{s}f\left( x\right) +\left(
1-t\right) ^{s}f\left( y\right),  \label{105}
\end{equation}%
\textit{for all }$x,y\in \left( 0,b\right] $\textit{\ \ and }$t\in \left[ 0,1%
\right] $\textit{. This class of }$\mathit{s-}$\textit{convex functions is
usually denoted by }$K_{s}^{2}$\textit{.}
\end{definitionalph}

\begin{definitionalph}
 \textit{[\ref{var}] Let }$h:J\subseteq
\mathbb{R}
\rightarrow
\mathbb{R}
$\textit{\ be a positive function. We say that }$f:I\subseteq
\mathbb{R}
\rightarrow
\mathbb{R}
$\textit{\ is }$h-$\textit{convex function, or that }$f$\textit{\ belongs to
the class }$SX\left( h,I\right) $\textit{, if }$f$\textit{\ is non-negative
and for all }$x,y\in I$\textit{\ and }$t\in \left[ 0,1\right] $\textit{\ we
have \ \ \ \ \ \ \ \ \ \ \ \ }%
\begin{equation}
f\left( tx+\left( 1-t\right) y\right) \leq h\left( t\right) f\left( x\right)
+h\left( 1-t\right) f\left( y\right).  \label{106}
\end{equation}
If inequality (\ref{106}) is reversed, then $f$ is said to be $h-$%
concave, i.e. $f\in SV\left( h,I\right) $.
\end{definitionalph}

 Obviously, if $h\left( t\right)
=t $, then all non-negative convex functions belong to $SX\left( h,I\right) $%
\ and all non-negative concave functions belong to $SV\left( h,I\right) $; if
$h\left( t\right) =\frac{1}{t}$, then $SX\left( h,I\right) =Q\left( I\right)
$; if $h\left( t\right) =1$, then $SX\left( h,I\right) \supseteq P\left(
I\right) $; and if $h\left( t\right) =t^{s}$, where $s\in \left( 0,1\right] $, then $SX\left( h,I\right) \supseteq K_{s}^{2}$.
For recent results  concerning $h-$convex functions see \cite{bom,bh2011,zeki,V}
and references therein.

\begin{definitionalph}  \cite{T}
\label{def 1.1} The function $f:[0,b]\rightarrow
\mathbb{R}
$ is said to be $m-$convex, where $m\in \lbrack 0,1],$ if for every $x,y\in
\lbrack 0,b]$ and $t\in \lbrack 0,1]$ we have
\begin{equation*}
f(tx+m(1-t)y)\leq tf(x)+m(1-t)f(y).
\end{equation*}%
Denote by $K_{m}(b)$ the set of the $m-$convex functions on $[0,b]$ for
which $f(0)\leq 0.$
\end{definitionalph}

\begin{definitionalph}  \cite{MIH}
The function $f: [0,b]\rightarrow\mathbb{R},$ $b>0$ is said to be $(\alpha ,m)-$convex, where $(\alpha ,m)\in \lbrack
0,1]^{2},$ if for all $x,y\in \lbrack 0,b]$ and $t\in \lbrack 0,1]$ we have%
\begin{equation*}
f(tx+m(1-t)y)\leq t^{\alpha }f(x)+m(1-t^{\alpha })f(y).
\end{equation*}%
Denote by $K_{m}^{\alpha }(b)$ the class of all $(\alpha ,m)-$convex
functions on $[0,b]$ for which $f(0)\leq 0.$
\end{definitionalph}

If we choose $(\alpha ,m)=(1,m)$, it can be easily seen that $(\alpha ,m)-$convexity reduces to $m-$%
convexity and for $(\alpha ,m)=(1,1),$ we have ordinary convex functions on $%
[0,b].$

Recently,  \"  Ozdemir et al. \cite{oak2011} established some Simpson type inequalities for
functions whose third derivatives in the absolute value are $m-$convex. In \cite{oak2012},
\" Ozdemir et al.  established the following inequalities for
functions whose third derivatives in the absolute value are $s-$convex in
the second sense.

\begin{theoremalph}
\label{tha} Let $f:I\subset \lbrack 0,\infty )\rightarrow
\mathbb{R}
$ be a differentiable function on $I^{\circ }$ such that $f'''\in L_1[a,b],$ where $a,b\in I^{\circ }$ with $a<b.$ If $%
\left| f'''\right| $ is $s-$convex in
the second sense on $[a,b]$   for some fixed $s\in (0,1],$ then
\begin{align}
&\left| \int_{a}^{b}f(x)dx-\frac{b-a}{6}\left[ f(a)+4f\left( \frac{a+b}{%
2}\right) +f(b)\right] \right| \notag \\
\leq &\frac{\left( b-a\right) ^{4}}{6}\left[ \frac{%
2^{-4-s}\left((1+s)(2+s)+34+2^{4+s}(-2+s)+11s+s^{2}\right)}{(1+s)(2+s)(3+s)(4+s)}\right]
  \left[ \left| f'''(a)\right|
+\left| f'''(b)\right| \right]. \label{1.6}
\end{align}
\end{theoremalph}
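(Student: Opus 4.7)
The plan is to follow the standard template for third-derivative Simpson-type inequalities: first establish an integral identity that represents the Simpson quadrature error as a weighted integral of $f'''$, and then bound that integral using the hypothesised $s$-convexity of $|f'''|$. Concretely, I would seek (or rederive from the \"Ozdemir et al.\ framework) an identity of the form
\begin{equation*}
\int_{a}^{b}f(x)\,dx-\frac{b-a}{6}\left[f(a)+4f\!\left(\frac{a+b}{2}\right)+f(b)\right]=(b-a)^{4}\int_{0}^{1}\bigl[p_{1}(t)f'''(u_{1}(t))+p_{2}(t)f'''(u_{2}(t))\bigr]\,dt,
\end{equation*}
where $u_{1}(t)=\tfrac{1+t}{2}a+\tfrac{1-t}{2}b$ and $u_{2}(t)=\tfrac{t}{2}a+\tfrac{2-t}{2}b$ parametrise the halves $[a,\tfrac{a+b}{2}]$ and $[\tfrac{a+b}{2},b]$, and $p_{1},p_{2}$ are explicit polynomials on $[0,1]$. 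The polynomials are chosen so that three successive integrations by parts produce precisely the Simpson weights $\tfrac{1}{6},\tfrac{2}{3},\tfrac{1}{6}$ at the nodes $a,\tfrac{a+b}{2},b$ with no residual boundary terms, much as in the derivation of the analogous midpoint/trapezoidal identities in the literature cited in the introduction.

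Next, I would take absolute values, apply the triangle inequality, and exploit the $s$-convexity (in the second sense) of $|f'''|$. Since $u_{1}(t)$ and $u_{2}(t)$ are convex combinations of $a$ and $b$ with coefficients $\bigl(\tfrac{1+t}{2},\tfrac{1-t}{2}\bigr)$ and $\bigl(\tfrac{t}{2},\tfrac{2-t}{2}\bigr)$ respectively, inequality~(\ref{105}) yields
\begin{align*}
|f'''(u_{1}(t))| &\leq \left(\tfrac{1+t}{2}\right)^{s}|f'''(a)|+\left(\tfrac{1-t}{2}\right)^{s}|f'''(b)|,\\
|f'''(u_{2}(t))| &\leq \left(\tfrac{t}{2}\right)^{s}|f'''(a)|+\left(\tfrac{2-t}{2}\right)^{s}|f'''(b)|.
\end{align*}
This reduces the estimate to four scalar integrals of the form $\int_{0}^{1}|p_{i}(t)|\bigl(\tfrac{\lambda+\mu t}{2}\bigr)^{s}\,dt$. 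Pulling out the factor $2^{-s}$ and expanding the polynomials $p_{i}$, each integral becomes a finite sum of beta-type moments $\int_{0}^{1}t^{k}(1\pm t)^{s}\,dt$ that can be evaluated in closed form.

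The principal obstacle is the bookkeeping of these four $s$-dependent integrals: after collecting the coefficients of $|f'''(a)|$ and $|f'''(b)|$, one must verify that the result simplifies to the single rational expression
\[
\frac{2^{-4-s}\bigl((1+s)(2+s)+34+2^{4+s}(-2+s)+11s+s^{2}\bigr)}{(1+s)(2+s)(3+s)(4+s)}.
\]
The symmetry $t\leftrightarrow 1-t$ between the two halves of $[a,b]$ should force the coefficients of $|f'''(a)|$ and $|f'''(b)|$ to coincide, which is exactly why the right-hand side of (\ref{1.6}) contains the symmetric factor $|f'''(a)|+|f'''(b)|$. As a sanity check, specialising to $s=1$ should recover the bound valid when $|f'''|$ is convex, which in turn should match the $m=1$ specialisation of the $m$-convex Simpson inequality of \"Ozdemir et al.\ cited in the paragraph preceding the statement; agreement at this value would confirm both the identity and the final algebraic combination.
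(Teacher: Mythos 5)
Your plan is essentially the paper's own argument: the paper obtains \eqref{1.6} as the special case $h(t)=t^{s}$ of Theorem \ref{T1}, whose proof rests on exactly such a kernel identity (Lemma \ref{L1}, with a single kernel $p(t)$ on $[0,1]$ of which your two-kernel form on the two half-intervals is merely a reparametrization), followed by the triangle inequality, the $s$-convexity estimate, and the same $t\leftrightarrow 1-t$ symmetry you invoke to make the coefficients of $\left| f'''(a)\right|$ and $\left| f'''(b)\right|$ coincide. The bookkeeping you defer is the routine evaluation of $\int_{0}^{1/2}t^{2+s}\left(\tfrac{1}{2}-t\right)dt=\tfrac{2^{-4-s}}{(3+s)(4+s)}$ and $\int_{0}^{1/2}t^{2}\left(\tfrac{1}{2}-t\right)(1-t)^{s}dt$, whose sum is indeed the stated rational expression, so there is no gap beyond unperformed but standard algebra.
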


\begin{theoremalph}
\label{thb} Let $f:I\subset \lbrack 0,\infty )\rightarrow
\mathbb{R}
$ be a differentiable function on $I^{\circ }$ such that $f'''\in L_1[a,b],$ where $a,b\in I^{\circ }$ with $a<b.$ If $%
\left| f'''\right| ^{q}$ is $s-$convex
in the second sense on $[a,b]$   for some fixed $s\in (0,1]$ and $q>1$
with $\frac{1}{p}+\frac{1}{q}=1,$ then
\begin{align}
&\left| \int_{a}^{b}f(x)dx-\frac{b-a}{6}\left[ f(a)+4f\left( \frac{a+b}{%
2}\right) +f(b)\right] \right| \notag\\
\leq &\frac{\left( b-a\right) ^{4}}{48}\left( \frac{1}{2}\right) ^{\frac{1}{%
p}}\left( \frac{\Gamma (2p+1)\Gamma (p+1)}{\Gamma (3p+2)}\right) ^{\frac{1}{p%
}}  \left\{ \left[ \frac{1}{2^{s+1}(s+1)}\left| f'''(a)\right| ^{q}+\frac{2^{s+1}-1}{2^{s+1}(s+1)}\left|
f'''(b)\right| ^{q}\right] ^{\frac{1}{q}%
}\right. \notag\\
&\left. +\left[ \frac{2^{s+1}-1}{2^{s+1}(s+1)}\left| f'''(a)\right| ^{q}+\frac{1}{2^{s+1}(s+1)}\left|
f'''(b)\right| ^{q}\right] ^{\frac{1}{q}%
}\right\}.  \label{1.7}
\end{align}
\end{theoremalph}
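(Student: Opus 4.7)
My plan is to follow the standard recipe for such H\"older-type Simpson-style bounds: start from an integral identity that expresses the quadrature error as an integral of $f'''$ against an explicit kernel, pass to absolute values, apply H\"older's inequality, and finish with the $s$-convexity hypothesis on $|f'''|^q$.

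First I would invoke the integral identity (the same lemma that underlies Theorem A) of the form
\begin{equation*}
\int_a^b f(x)\,dx - \frac{b-a}{6}\left[f(a)+4f\left(\tfrac{a+b}{2}\right)+f(b)\right] = \frac{(b-a)^4}{48}\left(\int_0^{1/2} K_1(t)\,f'''(ta+(1-t)b)\,dt + \int_{1/2}^1 K_2(t)\,f'''(ta+(1-t)b)\,dt\right),
\end{equation*}
for explicit polynomial kernels $K_1,K_2$ obtained by three rounds of integration by parts. The split at $t=1/2$ is dictated by the appearance of $f((a+b)/2)$ in Simpson's rule and is precisely what produces the two symmetric brackets on the right of (1.7).

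Next I would pass absolute values through each integral, apply H\"older's inequality with conjugate exponents $p$ and $q$, and use $s$-convexity of $|f'''|^q$ pointwise in the form
\begin{equation*}
|f'''(ta+(1-t)b)|^q \leq t^{s}|f'''(a)|^q + (1-t)^{s}|f'''(b)|^q.
\end{equation*}
The elementary integrals $\int_0^{1/2} t^s\,dt = \frac{1}{2^{s+1}(s+1)}$ and $\int_0^{1/2}(1-t)^s\,dt = \frac{2^{s+1}-1}{2^{s+1}(s+1)}$ match exactly the constants inside the two $\frac{1}{q}$-power brackets of (1.7); the roles of $a$ and $b$ swap in the second bracket by the symmetric behaviour of the two halves under $t\mapsto 1-t$.

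The remaining, and most delicate, step is the evaluation of the kernel norms $\int_0^{1/2}|K_1(t)|^p\,dt$ and $\int_{1/2}^1|K_2(t)|^p\,dt$. After a suitable linear change of variable, each of these should reduce to a multiple of the standard Beta integral $\int_0^1 u^{2p}(1-u)^p\,du = B(2p+1,p+1) = \frac{\Gamma(2p+1)\Gamma(p+1)}{\Gamma(3p+2)}$, with a residual factor of $\tfrac{1}{2}$ accounting for the $(1/2)^{1/p}$ appearing in (1.7). The main obstacle is precisely this bookkeeping: pinning down $K_1,K_2$ explicitly from the identity and confirming that their $L^p$-norms produce the claimed Gamma-function expression. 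Once this is settled, collecting the constants from H\"older, summing the two half-contributions, and pulling the overall $(b-a)^4/48$ out front yields (1.7).
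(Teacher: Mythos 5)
Your plan is correct and is essentially the paper's own argument: the paper proves the more general $h$-convex version (Theorem \ref{T2}) starting from Lemma \ref{L1}, whose kernel is exactly $p(t)=\frac16 t^{2}\left(t-\frac12\right)$ on $[0,\frac12]$ and $\frac16(t-1)^{2}\left(t-\frac12\right)$ on $(\frac12,1]$, and then recovers \eqref{1.7} by taking $h(t)=t^{s}$ (Remark \ref{R2}), with the same Hölder-plus-$s$-convexity steps and the same integrals $\int_0^{1/2}t^{s}\,dt$ and $\int_0^{1/2}(1-t)^{s}\,dt$ you compute. The kernel-norm evaluation you flag as the delicate step is precisely the paper's identity \eqref{2.3'}, namely $\int_0^{1/2}\bigl(t^{2}(\tfrac12-t)\bigr)^{p}\,dt=\frac{\Gamma(2p+1)\Gamma(p+1)}{2^{3p+1}\Gamma(3p+2)}$, which after taking the $1/p$-th power and combining with the $\frac16$ prefactor produces the $\frac{1}{48}\left(\frac12\right)^{1/p}$ in the statement, exactly as your anticipated bookkeeping predicts.
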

\begin{theoremalph}
\label{thc} Suppose that all the assumptions of Theorem \ref{thb}
are satisfied. Then%
\begin{align}
&\left| \int_{a}^{b}f(x)dx-\frac{b-a}{6}\left[ f(a)+4f\left( \frac{a+b}{%
2}\right) +f(b)\right] \right| \notag\\
\leq &\frac{\left( b-a\right) ^{4}}{6}\left( \frac{1}{192}\right) ^{1-\frac{%
1}{q}} \notag\\
&\times \left\{ \left( \frac{2^{-4-s}}{(3+s)(4+s)}\left| f'''(a)\right| ^{q}+\frac{2^{-4-s}\left(
34+2^{4+s}(-2+s)+11s+s^{2}\right) }{(1+s)(2+s)(3+s)(4+s)}\left|
f'''(b)\right| ^{q}\right) ^{\frac{1}{q}%
}\right. \notag\\
&\left. +\left( \frac{2^{-4-s}\left( 34+2^{4+s}(-2+s)+11s+s^{2}\right) }{%
(1+s)(2+s)(3+s)(4+s)}\left| f'''(a)\right| ^{q}+\frac{2^{-4-s}}{(3+s)(4+s)}\left| f'''(b)\right| ^{q}\right) ^{\frac{1}{q}}\right\}.  \label{1.8}
\end{align}
\end{theoremalph}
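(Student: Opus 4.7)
My plan is to start from the same integral identity that underlies the proofs of Theorems~\ref{tha} and~\ref{thb} (and which presumably appears as an unnamed lemma earlier in the development). That identity rewrites the Simpson error
\[
E(f):=\int_a^b f(x)\,dx - \frac{b-a}{6}\left[f(a) + 4f\!\left(\frac{a+b}{2}\right) + f(b)\right]
\]
as $(b-a)^4$ times $\int_0^1 K(t)\,f'''\bigl(ta + (1-t)b\bigr)\,dt$ for an explicit kernel $K$. Since Simpson's rule is symmetric about $(a+b)/2$, I will split this integral into the two pieces $\int_0^{1/2}$ and $\int_{1/2}^{1}$; these will correspond to the two summands appearing on the right-hand side of \eqref{1.8}.

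On each half-interval I will apply the power-mean inequality
\[
\int |K(t)|\,|g(t)|\,dt \le \left(\int |K(t)|\,dt\right)^{1-1/q}\left(\int |K(t)|\,|g(t)|^q\,dt\right)^{1/q},
\]
with $g(t)=f'''(ta + (1-t)b)$, and then use the $s$-convexity of $|f'''|^q$ to bound
\[
|g(t)|^q \le t^s\,|f'''(a)|^q + (1-t)^s\,|f'''(b)|^q.
\]
This separates the inner integral into three kinds of moments of $|K|$: the pure mass $\int |K(t)|\,dt$ (unweighted), and the two $s$-weighted moments $\int t^s|K(t)|\,dt$ and $\int (1-t)^s|K(t)|\,dt$, each integrated over the relevant half-interval.

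What remains is the evaluation of these moments. Matching the factor $\left(\frac{1}{192}\right)^{1-1/q}$ in \eqref{1.8}, I expect $\int_0^{1/2}|K(t)|\,dt = \frac{1}{192}$ (and the same on the other half by symmetry). The $s$-weighted moments should specialize to the compact expressions $\frac{2^{-4-s}}{(3+s)(4+s)}$ and $\frac{2^{-4-s}(34 + 2^{4+s}(-2+s) + 11s + s^2)}{(1+s)(2+s)(3+s)(4+s)}$ that already appeared in Theorem~\ref{tha}; substituting them into the power-mean estimate yields \eqref{1.8} directly. A useful consistency check: setting $q=1$ kills $(1/192)^{1-1/q}$ and the two bracketed sums combine to $(A+B)(|f'''(a)|+|f'''(b)|)$ with $A,B$ the two constants just named, so \eqref{1.8} collapses to \eqref{1.6}, as expected.

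The main obstacle is not conceptual but computational: the kernel $K(t)$ is piecewise cubic, so evaluating $\int_0^{1/2} t^s |K(t)|\,dt$ and $\int_0^{1/2}(1-t)^s |K(t)|\,dt$ reduces to integrals of $t^{s+k}(1-t)^j$ over subintervals, and collecting the pieces into the stated closed forms requires careful bookkeeping. Since precisely these moments have already been computed in the derivation of Theorem~\ref{tha}, however, I would simply quote those calculations rather than redo them, so the only genuinely new step compared with Theorem~\ref{thb} is substituting the power-mean inequality for Hölder's inequality.
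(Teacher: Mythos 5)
Your proposal is correct and follows essentially the same route as the paper: the paper obtains \eqref{1.8} as the special case $h(t)=t^{s}$ of Theorem~\ref{T3} (see Remark~\ref{R3}), whose proof is exactly your scheme --- the kernel identity of Lemma~\ref{L1}, the split at $t=\tfrac12$, the power-mean inequality on each half, convexity of $|f'''|^{q}$, and the evaluation of the moments $\int_{0}^{1/2}t^{2}(\tfrac12-t)\,dt=\tfrac1{192}$ and $\int_{0}^{1/2}t^{2}(\tfrac12-t)h(t)\,dt$, $\int_{0}^{1/2}t^{2}(\tfrac12-t)h(1-t)\,dt$. Your $q=1$ consistency check against \eqref{1.6} is also sound.
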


The main purpose of this paper is to establish some new Simpson type inequalities for
functions whose third derivatives in the absolute value are $h-$convex and $(\alpha,m)-$convex, respectively.

\section{Simpson type inequalities for $h$-convex functions}

To prove our main theorems, we need the following identity  established   in  \cite{ah2011}:

\begin{lemma}
\label{L1} Let $f:I\rightarrow
\mathbb{R}
$ be a function such that $f'''$be absolutely
continuous on $I^{\circ }$, the interior of I. Assume that $a,b\in I^{\circ
},$ with $a<b$ and $f'''\in L_1[a,b].$ Then, the
following equality holds:%
\begin{align*}
 \int_{a}^{b}f(x)dx-\frac{b-a}{6}\left[ f(a)+4f\left( \frac{a+b}{2}\right)
+f(b)\right]
= \left( b-a\right) ^{4}\int_{0}^{1}p(t)f'''(ta+(1-t)b)dt,
\end{align*}%
where%
\begin{equation*}
p(t)=\left\{
\begin{array}{ll}
\frac{1}{6}t^{2}\left( t-\frac{1}{2}\right), &t\in \lbrack 0,%
\frac{1}{2}], \medskip
\\
\frac{1}{6}(t-1)^{2}\left( t-\frac{1}{2}\right), &t\in (\frac{1}{2%
}, 1].
\end{array}%
\right.
\end{equation*}
\end{lemma}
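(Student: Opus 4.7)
The plan is to prove the identity by applying integration by parts three times to the right-hand side, using the kernel $p(t)$ to systematically reduce $f'''$ to $f$ and recover the Simpson formula.

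First I would make the affine substitution $g(t):=f(ta+(1-t)b)$, so that $f'''(ta+(1-t)b)=-g'''(t)/(b-a)^3$. The factor $(b-a)^{4}$ in front of the integral then collapses to $-(b-a)$, and the problem reduces to computing $\int_0^1 p(t)\,g'''(t)\,dt$ and showing it equals $\tfrac{1}{6}[g(0)+4g(1/2)+g(1)]-\int_0^1 g(t)\,dt$; the final change of variables $u=ta+(1-t)b$ recovers the LHS.

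Next I would compute and record the values of $p$ and its derivatives at the three distinguished points $0, 1/2, 1$ and check continuity at the breakpoint $1/2$. A short calculation gives $p(0)=p(1/2)=p(1)=0$, $p'(0)=p'(1)=0$ with $p'(1/2)=1/24$ common to both pieces, and $p''$ equal to $-1/6, 1/3$ on the left piece and $-1/3, 1/6$ on the right piece at $0,1/2$ and $1/2,1$ respectively. These values will power the calculation; in particular, $p$ and $p'$ are globally continuous on $[0,1]$, whereas $p''$ has a prescribed jump of $-2/3$ at $1/2$.

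Now I would carry out the three integrations by parts. The first two are clean global IBPs: they leave no boundary contribution because $p$ and $p'$ vanish at $0,1$ and are continuous at $1/2$, yielding $\int_0^1 p(t)g'''(t)\,dt=\int_0^1 p''(t)g'(t)\,dt$. The third IBP is the delicate step and must be performed \emph{piecewise} on $[0,1/2]$ and $[1/2,1]$ separately, because $p''$ jumps at $1/2$. The boundary contributions at $0,1/2,1$ combine into
\[
\tfrac{1}{6}g(0)+\tfrac{1}{3}g(1/2)+\tfrac{1}{3}g(1/2)+\tfrac{1}{6}g(1)=\tfrac{1}{6}\bigl[g(0)+4g(1/2)+g(1)\bigr],
\]
while the interior piece uses $p_1'''(t)=p_2'''(t)=1$ to produce $-\int_0^1 g(t)\,dt$. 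Assembling these and unwinding the substitution gives the claimed identity.

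The main obstacle, and the only place where care is required, is the third integration by parts: one must split the integral at $t=1/2$, track the four boundary contributions, and correctly fold the one-sided values of $p''$ into the coefficient $4$ of $f\!\left(\tfrac{a+b}{2}\right)$. Everything else is bookkeeping: verifying the vanishing boundary terms in the first two IBPs and performing the affine change of variable in the resulting integral of $g$.
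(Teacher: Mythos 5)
Your proof is correct. The paper itself does not prove Lemma \ref{L1}; it simply imports the identity from the reference \cite{ah2011}, so there is no in-paper argument to compare against. Your triple integration by parts is the standard way to establish such a kernel identity, and all the numerical data you quote check out: with $p_1(t)=\frac16 t^2(t-\frac12)$ and $p_2(t)=\frac16(t-1)^2(t-\frac12)$ one has $p_1''(t)=t-\frac16$, $p_2''(t)=t-\frac56$, hence $p''(0)=-\frac16$, $p''(\frac12^-)=\frac13$, $p''(\frac12^+)=-\frac13$, $p''(1)=\frac16$, and the boundary terms of the piecewise third integration by parts assemble to $\frac16 g(0)+\frac23 g(\frac12)+\frac16 g(1)=\frac16\bigl[g(0)+4g(\tfrac12)+g(1)\bigr]$ while $p'''\equiv 1$ yields $-\int_0^1 g$. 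Combined with $f'''(ta+(1-t)b)=-g'''(t)/(b-a)^3$ and $\int_a^b f(x)\,dx=(b-a)\int_0^1 g(t)\,dt$, this gives exactly the stated equality. The only point worth making explicit is the regularity needed to justify the three integrations by parts (absolute continuity of $g$, $g'$, $g''$ so that the fundamental theorem of calculus applies on each subinterval), which is guaranteed by the hypothesis that $f'''$ is absolutely continuous with $f'''\in L_1[a,b]$; with that remark your argument is complete and self-contained.
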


Using this lemma, we can obtain the following  inequalities for $h-$convex functions.

\begin{theorem}
\label{T1} Let $h:J\subseteq\mathbb{R} \rightarrow \mathbb{R}$  $([0,1]\subseteq J)$ be a non-negative function, and $f:I\subset \lbrack 0,\infty )\rightarrow
\mathbb{R}
$ be a differentiable function on $I^{\circ }$ such that $f''' \in L_1[a,b],$ where $a,b\in I^{\circ }$ with $a<b.$ If $%
\left| f''' \right| $ is $h-$convex  on $[a,b]$,  then
\begin{align}
&\left| \int_{a}^{b}f(x)dx-\frac{b-a}{6}\left[ f(a)+4f\left( \frac{a+b}{%
2}\right) +f(b)\right] \right| \notag \\
\leq &\frac{\left( b-a\right) ^{4}}{6}\left[ \int_{0}^{\frac{1}{2}%
}t^{2}\left( \frac{1}{2}-t\right)   h(t) dt+  \int_{0}^{\frac{1}{2}%
}t^{2}\left( \frac{1}{2}-t\right)   h(1-t) dt\right]
  \left[ \left| f'''(a)\right|
+\left| f'''(b)\right| \right]. \label{2.2}
\end{align}
\end{theorem}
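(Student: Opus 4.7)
The plan is to start from the integral identity in Lemma~\ref{L1}, pass to absolute values, split the integral at $t=1/2$ using the piecewise definition of $p(t)$, apply the $h$-convexity of $|f'''|$ pointwise, and finally symmetrize the two halves of the integral by the substitution $u = 1-t$.

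First I would take absolute values on both sides of the identity in Lemma~\ref{L1} and use the triangle inequality to obtain
\begin{equation*}
\left|\int_a^b f(x)\,dx - \frac{b-a}{6}\left[f(a) + 4f\!\left(\tfrac{a+b}{2}\right) + f(b)\right]\right| \leq (b-a)^4 \int_0^1 |p(t)|\,|f'''(ta+(1-t)b)|\,dt.
\end{equation*}
Next I would determine the sign of $p(t)$ on each subinterval. On $[0,1/2]$ we have $t-\tfrac12\le 0$ and $t^2\ge0$, so $p(t)\le 0$ and $|p(t)| = \tfrac{1}{6}t^2(\tfrac12 - t)$. On $(1/2,1]$ both $(t-1)^2$ and $t-\tfrac12$ are non-negative, so $|p(t)| = \tfrac{1}{6}(t-1)^2(t-\tfrac12)$.

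Then I would apply the $h$-convexity of $|f'''|$ in the form
\begin{equation*}
|f'''(ta+(1-t)b)| \leq h(t)|f'''(a)| + h(1-t)|f'''(b)|,
\end{equation*}
split the integral over $[0,1]$ as a sum of integrals over $[0,1/2]$ and $[1/2,1]$, and then in the integral over $[1/2,1]$ make the change of variables $u = 1-t$. Under this substitution, $(t-1)^2(t-\tfrac12)$ becomes $u^2(\tfrac12 - u)$, while $h(t)$ and $h(1-t)$ swap roles. Combining everything, the coefficient of $|f'''(a)|$ becomes $\int_0^{1/2} t^2(\tfrac12-t)h(t)\,dt + \int_0^{1/2} t^2(\tfrac12-t)h(1-t)\,dt$, and by symmetry the same expression multiplies $|f'''(b)|$, yielding the bound claimed in \eqref{2.2}.

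I do not expect any essential obstacle in this argument; everything reduces to routine manipulations once the substitution $u = 1-t$ is used to symmetrize the two halves. The only subtleties to keep track of are the correct signs of $p(t)$ on the two subintervals (so that the absolute value is written without error) and the fact that the change of variables correctly interchanges $h(t)$ with $h(1-t)$, which is what ultimately makes the coefficients of $|f'''(a)|$ and $|f'''(b)|$ coincide.
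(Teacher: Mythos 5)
Your proposal is correct and follows essentially the same route as the paper: Lemma~\ref{L1}, the triangle inequality, the sign analysis of $p(t)$ on the two subintervals, pointwise $h$-convexity of $|f'''|$, and the substitution $u=1-t$ on $[\tfrac12,1]$ to identify the coefficients of $|f'''(a)|$ and $|f'''(b)|$ with the bracket in \eqref{2.2}. The paper states this last symmetrization as a ``fact'' without naming the substitution, but it is exactly the change of variables you describe.
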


\begin{proof}
From  Lemma \ref{L1} and $s-$convexity of $\left| f'''\right| $, we have%
\begin{align*}
&\left| \int_{a}^{b}f(x)dx-\frac{b-a}{6}\left[ f(a)+4f\left( \frac{a+b}{%
2}\right) +f(b)\right] \right| \\
\leq &\left( b-a\right) ^{4}\left\{ \int_{0}^{\frac{1}{2}}\left| \frac{1%
}{6}t^{2}\left( t-\frac{1}{2}\right) \right| \left| f'''(ta+(1-t)b)\right| dt\right. \\
&\left. +\int_{\frac{1}{2}}^{1}\left| \frac{1}{6}(t-1)^{2}\left( t-%
\frac{1}{2}\right) \right| \left| f'''(ta+(1-t)b)\right| dt\right\} \\
\leq &\frac{\left( b-a\right) ^{4}}{6}\left\{ \int_{0}^{\frac{1}{2}%
}t^{2}\left( \frac{1}{2}-t\right) \left( h(t)\left| f'''(a)\right| +h(1-t)\left| f'''(b)\right| \right) dt\right. \\
&\left. +\int_{\frac{1}{2}}^{1}(t-1)^{2}\left( t-\frac{1}{2}\right) \left(
h(t)\left| f'''(a)\right|
+h(1-t)\left| f'''(b)\right| \right)
dt\right\} \\
=&\frac{\left( b-a\right) ^{4}}{6}\left[\int_{0}^{\frac{1}{2}%
}t^{2}\left( \frac{1}{2}-t\right)   h(t) dt+  \int_{\frac{1}{2}}^{1}(t-1)^{2}\left( t-\frac{1}{2}\right)  h(t) dt\right]
  \left[ \left| f'''(a)\right|
+\left| f'''(b)\right| \right] ,
\end{align*}%
where we have used the fact that
\begin{align*}
&\int_{0}^{\frac{1}{2}%
}t^{2}\left( \frac{1}{2}-t\right)   h(t) dt+  \int_{\frac{1}{2}}^{1}(t-1)^{2}\left( t-\frac{1}{2}\right)  h(t) dt\\
=&\int_{0}^{\frac{1}{2}%
}t^{2}\left( \frac{1}{2}-t\right)   h(1-t) dt+  \int_{\frac{1}{2}}^{1}(t-1)^{2}\left( t-\frac{1}{2}\right)  h(1-t) dt\\
=&\int_{0}^{\frac{1}{2}%
}t^{2}\left( \frac{1}{2}-t\right)   h(t) dt+  \int_{0}^{\frac{1}{2}%
}t^{2}\left( \frac{1}{2}-t\right)   h(1-t) dt.
\end{align*}
Hence,    the  proof of   \eqref{2.2} is complete.
\end{proof}

\begin{remark}
\label{R1}
  In Theorem \ref{T1}, if we choose $h(t)=t^s$, $s\in (0,1],$  then  \eqref{2.2}
reduces to \eqref{1.6}.
\end{remark}

\begin{theorem}
\label{T2} Let $h:J\subseteq\mathbb{R} \rightarrow \mathbb{R}$  $([0,1]\subseteq J)$ be a non-negative   function, and  $f:I\subset \lbrack 0,\infty )\rightarrow
\mathbb{R}
$ be a differentiable function on $I^{\circ }$ such that $f'''\in L_1[a,b],$ where $a,b\in I^{\circ }$ with $a<b.$ If $%
\left| f'''\right| ^{q}$ is $h-$convex
 on $[a,b]$   and $q>1$
with $\frac{1}{p}+\frac{1}{q}=1,$ then
\begin{align}
&\left| \int_{a}^{b}f(x)dx-\frac{b-a}{6}\left[ f(a)+4f\left( \frac{a+b}{%
2}\right) +f(b)\right] \right| \notag\\
\leq &\frac{\left( b-a\right) ^{4}}{48}\left( \frac{1}{2}\right) ^{\frac{1}{%
p}}\left( \frac{\Gamma (2p+1)\Gamma (p+1)%
}{ \Gamma (3p+2)}\right) ^{\frac{1}{p}} \notag\\
& \times  \left\{ \left[ \left(\int_{0}^{\frac{1}{2}}  h(t)dt\right)\left|
f'''(a)\right| ^{q}+\left(\int_{0}^{\frac{1}{2}}h(1-t)dt\right)\left|
f'''(b)\right| ^{q} \right] ^{%
\frac{1}{q}}\right. \notag\\
&\left. +\left[ \left(\int_{0}^{\frac{1}{2}}h(1-t)dt\right)\left| f'''(a)\right| ^{q}+\left(\int_{0}^{\frac{1}{2}}h(t)dt\right)\left| f'''(b)\right| ^{q} \right] ^{\frac{1}{q}%
}\right\}.   \label{2.3}
\end{align}
\end{theorem}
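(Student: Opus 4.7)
The plan is to run the standard Hölder-plus-convexity pipeline on the identity of Lemma \ref{L1}, splitting at $t=1/2$ so that each piece of the kernel $p(t)$ behaves nicely.

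\medskip

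First, I would start from Lemma \ref{L1}, take absolute values inside the integral, and split into the two sub-intervals where $p(t)$ has a simple sign:
\begin{align*}
&\left| \int_{a}^{b}f(x)dx-\frac{b-a}{6}\left[ f(a)+4f\!\left(\tfrac{a+b}{2}\right)+f(b)\right]\right|\\
\leq &\, \frac{(b-a)^{4}}{6}\int_{0}^{1/2} t^{2}\!\left(\tfrac{1}{2}-t\right)\!\left|f'''(ta+(1-t)b)\right|dt + \frac{(b-a)^{4}}{6}\int_{1/2}^{1}(t-1)^{2}\!\left(t-\tfrac{1}{2}\right)\!\left|f'''(ta+(1-t)b)\right|dt.
\end{align*}

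\medskip

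Next, I would apply Hölder's inequality with exponents $p,q$ to each of the two integrals, separating the polynomial weight from $|f'''|$. The key computation is
\[
\int_{0}^{1/2}\!\left[t^{2}\!\left(\tfrac{1}{2}-t\right)\right]^{p}dt,
\]
which under the substitution $u=2t$ becomes $\frac{1}{2^{3p+1}}\int_{0}^{1}u^{2p}(1-u)^{p}du = \frac{1}{2^{3p+1}}\cdot\frac{\Gamma(2p+1)\Gamma(p+1)}{\Gamma(3p+2)}$, the classical Beta-function evaluation. The second weight integral, $\int_{1/2}^{1}[(t-1)^{2}(t-\tfrac{1}{2})]^{p}dt$, is reduced to the same quantity by the substitution $u=1-t$. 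Taking $p$-th roots and combining with the prefactor $\tfrac{1}{6}$ yields exactly $\tfrac{1}{48}\,(1/2)^{1/p}\left(\Gamma(2p+1)\Gamma(p+1)/\Gamma(3p+2)\right)^{1/p}$ outside each bracket, which matches the target.

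\medskip

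For the $|f'''|^{q}$-factor I would invoke $h$-convexity of $|f'''|^{q}$ pointwise,
\[
|f'''(ta+(1-t)b)|^{q}\leq h(t)|f'''(a)|^{q}+h(1-t)|f'''(b)|^{q},
\]
and integrate over $[0,1/2]$ to obtain the first bracket in \eqref{2.3}. For the integral over $[1/2,1]$, the substitution $u=1-t$ converts $\int_{1/2}^{1}h(t)\,dt=\int_{0}^{1/2}h(1-u)\,du$ and $\int_{1/2}^{1}h(1-t)\,dt=\int_{0}^{1/2}h(u)\,du$, swapping the roles of $|f'''(a)|^q$ and $|f'''(b)|^q$ and thereby reproducing the second bracket in \eqref{2.3}.

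\medskip

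I do not anticipate any real obstacle: the argument is essentially Theorem \ref{thb} with $t^{s}$ replaced by a generic $h(t)$, and the only delicate bookkeeping step is the Beta-function evaluation of the weight integral and verifying the symmetry $u\mapsto 1-u$ that matches the two pieces of the kernel. Specializing $h(t)=t^{s}$ and computing $\int_0^{1/2}t^s\,dt=\frac{1}{2^{s+1}(s+1)}$ and $\int_0^{1/2}(1-t)^s\,dt=\frac{2^{s+1}-1}{2^{s+1}(s+1)}$ should recover \eqref{1.7} and serve as a sanity check.
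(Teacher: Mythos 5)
Your proposal is correct and follows essentially the same route as the paper's proof: apply H\"older's inequality on each half of the kernel, evaluate the weight integral via the Beta function identity $\int_{0}^{1/2}\bigl[t^{2}(\tfrac{1}{2}-t)\bigr]^{p}dt=\frac{\Gamma(2p+1)\Gamma(p+1)}{2^{3p+1}\Gamma(3p+2)}$, and use $h$-convexity pointwise, with the substitution $u=1-t$ converting the $[1/2,1]$ piece into the second bracket. Your explicit change of variables and the $h(t)=t^{s}$ sanity check recovering \eqref{1.7} are both accurate.
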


\begin{proof}
From Lemma \ref{L1},  and using the $s-$convexity of $\left| f'''\right| ^{q}$ and the well-known H\"{o}lder's
inequality,  we have%
\begin{align*}
&\left| \int_{a}^{b}f(x)dx-\frac{b-a}{6}\left[ f(a)+4f\left( \frac{a+b}{%
2}\right) +f(b)\right] \right| \\
\leq &\frac{\left( b-a\right) ^{4}}{6}\left\{ \left( \int_{0}^{\frac{1}{2}%
}\left( t^{2}\left( \frac{1}{2}-t\right) \right) ^{p}dt\right) ^{\frac{1}{p}%
}\left( \int_{0}^{\frac{1}{2}}\left| f'''(ta+(1-t)b)\right| ^{q}dt\right) ^{\frac{1}{q}}\right. \\
&\left. +\left( \int_{\frac{1}{2}}^{1}\left( (t-1)^{2}\left( t-\frac{1}{2}%
\right) \right) ^{p}dt\right) ^{\frac{1}{p}}\left( \int_{\frac{1}{2}%
}^{1}\left| f'''(ta+(1-t)b)\right|
^{q}dt\right) ^{\frac{1}{q}}\right\} \\
\leq &\frac{\left( b-a\right) ^{4}}{6}\left( \frac{\Gamma (2p+1)\Gamma (p+1)%
}{2^{3p+1}\Gamma (3p+2)}\right) ^{\frac{1}{p}}   \left\{ \left( \int_{0}^{\frac{1}{2}}\left[ h(t)\left|
f'''(a)\right| ^{q}+h(1-t)\left|
f'''(b)\right| ^{q}\right] dt\right) ^{%
\frac{1}{q}}\right. \\
&\left. +\left( \int_{\frac{1}{2}}^{1}\left[ h(t)\left| f'''(a)\right| ^{q}+h(1-t)\left| f'''(b)\right| ^{q}\right] dt\right) ^{\frac{1}{q}%
}\right\}\\
\leq &\frac{\left( b-a\right) ^{4}}{48}\left( \frac{1}{2}\right) ^{\frac{1}{%
p}}\left( \frac{\Gamma (2p+1)\Gamma (p+1)%
}{ \Gamma (3p+2)}\right) ^{\frac{1}{p}} \\
& \times  \left\{ \left[ \left(\int_{0}^{\frac{1}{2}}  h(t)dt\right)\left|
f'''(a)\right| ^{q}+\left(\int_{0}^{\frac{1}{2}}h(1-t)dt\right)\left|
f'''(b)\right| ^{q} \right] ^{%
\frac{1}{q}}\right. \\
&\left. +\left[ \left(\int_{\frac{1}{2}}^{1}  h(t)dt\right)\left| f'''(a)\right| ^{q}+\left(\int_{\frac{1}{2}}^{1}h(1-t)dt\right)\left| f'''(b)\right| ^{q} \right] ^{\frac{1}{q}%
}\right\},
\end{align*}%
where we have used the fact that
\begin{equation}
\int_{0}^{\frac{1}{2}}\left( t^{2}\left( \frac{1}{2}-t\right) \right)
^{p}dt=\int_{\frac{1}{2}}^{1}\left( (t-1)^{2}\left( t-\frac{1}{2}\right)
\right) ^{p}dt=\frac{\Gamma (2p+1)\Gamma (p+1)}{2^{3p+1}\Gamma (3p+2)}   \label{2.3'}
\end{equation}%
and $\Gamma $ is the Gamma function.
Hence,    the  proof of   \eqref{2.3} is complete.
\end{proof}

\begin{remark}
\label{R2}
  In Theorem \ref{T2}, if we choose $h(t)=t^s$, $s\in (0,1],$  then  \eqref{2.3}
reduces to \eqref{1.7}.
\end{remark}

 A different approach leads to the following result.

\begin{theorem}
\label{T3} Suppose that all the assumptions of Theorem \ref{T2}
are satisfied. Then%
\begin{align}
&\left| \int_{a}^{b}f(x)dx-\frac{b-a}{6}\left[ f(a)+4f\left( \frac{a+b}{%
2}\right) +f(b)\right] \right| \notag\\
\leq &\frac{\left( b-a\right) ^{4}}{6}\left( \frac{1}{192}\right) ^{1-\frac{%
1}{q}} \notag\\
&\times \left\{ \left[ \left( \int_{0}^{\frac{1}{2}%
}t^{2}\left( \frac{1}{2}-t\right) h(t) dt\right)\left| f'''(a)\right| ^{q}+\left( \int_{0}^{\frac{1}{2}%
}t^{2}\left( \frac{1}{2}-t\right) h(1-t) dt\right)\left| f'''(b)\right| ^{q}\right] ^{\frac{1}{q}%
}\right. \notag\\
&\left. +\left[ \left( \int_{0}^{\frac{1}{2}%
}t^{2}\left( \frac{1}{2}-t\right) h(1-t) dt\right)\left| f'''(a)\right| ^{q}+\left( \int_{0}^{\frac{1}{2}%
}t^{2}\left( \frac{1}{2}-t\right) h(t) dt\right)\left| f'''(b)\right| ^{q}\right] ^{\frac{1}{q}}\right\}.  \label{2.4}
\end{align}
\end{theorem}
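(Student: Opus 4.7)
The plan is to mimic the proof of Theorem \ref{T2}, but to split Hölder differently: instead of pulling all the weight $|p(t)|$ into one factor and applying Hölder with exponents $(p,q)$, I will use the power-mean inequality, keeping $|p(t)|$ as a common weight in both factors. This is what produces the term $\left(\tfrac{1}{192}\right)^{1-1/q}$ in the bound rather than the Beta/Gamma expression that appears in \eqref{2.3}.

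First, start from Lemma \ref{L1} and the triangle inequality to get
\[
\left|\int_a^b f(x)dx - \tfrac{b-a}{6}\!\left[f(a)+4f(\tfrac{a+b}{2})+f(b)\right]\right|
\leq \tfrac{(b-a)^4}{6}\!\left\{\!\int_0^{1/2}\!\! t^2(\tfrac{1}{2}-t)|f'''(ta+(1-t)b)|\,dt + \int_{1/2}^{1}\!\! (t-1)^2(t-\tfrac{1}{2})|f'''(ta+(1-t)b)|\,dt\right\}\!.
\]
For each of these two integrals, apply the power-mean inequality in the form
\[
\int w(t) g(t)\, dt \leq \left(\int w(t)\,dt\right)^{1-1/q}\!\left(\int w(t) g(t)^q\,dt\right)^{1/q},
\]
with $w(t)=t^2(\tfrac{1}{2}-t)$ on $[0,1/2]$ and $w(t)=(t-1)^2(t-\tfrac{1}{2})$ on $[1/2,1]$, and $g(t)=|f'''(ta+(1-t)b)|$.

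The second step is to evaluate the weight integrals: a direct computation gives
\[
\int_0^{1/2} t^2(\tfrac{1}{2}-t)\,dt = \tfrac{1}{48}-\tfrac{1}{64} = \tfrac{1}{192},
\]
and, by the substitution $u=1-t$, the same value for $\int_{1/2}^{1}(t-1)^2(t-\tfrac{1}{2})dt$. This produces the factor $\bigl(\tfrac{1}{192}\bigr)^{1-1/q}$ in front.

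Third, apply $h$-convexity of $|f'''|^q$ inside each remaining integral, yielding
\[
\int_0^{1/2} t^2(\tfrac{1}{2}-t)\bigl(h(t)|f'''(a)|^q + h(1-t)|f'''(b)|^q\bigr)\,dt
\]
on $[0,1/2]$ and the analogous expression on $[1/2,1]$. For the latter, use the substitution $u=1-t$ to convert it into an integral over $[0,1/2]$, which swaps the roles of $h(t)$ and $h(1-t)$ and gives exactly the second symmetric term in \eqref{2.4}. Taking $q$-th roots completes the bound.

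I do not expect any genuine obstacle: all three ingredients (Lemma \ref{L1}, power-mean inequality, $h$-convexity) are already available, and the only arithmetic input is the weight integral $\tfrac{1}{192}$. The one point requiring care is the change of variables on $[1/2,1]$, which must swap $h(t)\leftrightarrow h(1-t)$ to match the stated right-hand side; this is the step where the symmetric pair of terms in \eqref{2.4} is produced.
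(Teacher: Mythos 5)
Your proposal is correct and follows essentially the same route as the paper's own proof: Lemma \ref{L1}, the power-mean inequality with weight $|p(t)|$ on each half-interval, the weight integral $\int_0^{1/2}t^2(\tfrac12-t)\,dt=\tfrac{1}{192}$, then $h$-convexity and the substitution $u=1-t$ on $[\tfrac12,1]$ to produce the symmetric pair of terms.
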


\begin{proof}
From Lemma \ref{L1} and using the well-known power-mean inequality we
have%
\begin{align*}
&\left| \int_{a}^{b}f(x)dx-\frac{b-a}{6}\left[ f(a)+4f\left( \frac{a+b}{%
2}\right) +f(b)\right] \right| \\
\leq &\frac{\left( b-a\right) ^{4}}{6}\left\{ \left( \int_{0}^{\frac{1}{2}%
}t^{2}\left( \frac{1}{2}-t\right) dt\right) ^{1-\frac{1}{q}}\left( \int_{0}^{%
\frac{1}{2}}t^{2}\left( \frac{1}{2}-t\right) \left| f'''(ta+(1-t)b)\right| ^{q}dt\right) ^{\frac{1}{q}}\right. \\
&\left. +\left( \int_{\frac{1}{2}}^{1}(t-1)^{2}\left( t-\frac{1}{2}\right)
dt\right) ^{1-\frac{1}{q}}\left( \int_{\frac{1}{2}}^{1}(t-1)^{2}\left( t-%
\frac{1}{2}\right) \left| f'''(ta+(1-t)b)\right| ^{q}dt\right) ^{\frac{1}{q}}\right\} .
\end{align*}%
Since $\left| f'''\right| ^{q}$ is $s-$%
convex, we have
\begin{align*}
&\int_{0}^{\frac{1}{2}}t^{2}\left( \frac{1}{2}-t\right) \left|
f'''(ta+(1-t)b)\right| ^{q}dt \\
\leq &\int_{0}^{\frac{1}{2}}t^{2}\left( \frac{1}{2}-t\right) \left(
h(t)\left| f'''(a)\right|^{q}
+h(1-t)\left| f'''(b)\right|^{q} \right)
dt \\
=& \left( \int_{0}^{\frac{1}{2}%
}t^{2}\left( \frac{1}{2}-t\right) h(t) dt\right)\left| f'''(a)\right| ^{q}+\left( \int_{0}^{\frac{1}{2}%
}t^{2}\left( \frac{1}{2}-t\right) h(1-t) dt\right)\left| f'''(b)\right| ^{q}
\end{align*}%
and
\begin{align*}
&\int_{\frac{1}{2}}^{1}(t-1)^{2}\left( t-\frac{1}{2}\right) \left|
f'''(ta+(1-t)b)\right| ^{q}dt \\
\leq &\int_{\frac{1}{2}}^{1}(t-1)^{2}\left( t-\frac{1}{2}\right) \left(
h(t)\left| f'''(a)\right|^{q}
+h(1-t)\left| f'''(b)\right|^{q} \right)
dt \\
=&\left( \int_{0}^{\frac{1}{2}%
}t^{2}\left( \frac{1}{2}-t\right) h(1-t) dt\right)\left| f'''(a)\right| ^{q}+\left( \int_{0}^{\frac{1}{2}%
}t^{2}\left( \frac{1}{2}-t\right) h(t) dt\right)\left| f'''(b)\right| ^{q}.
\end{align*}%
Hence,    the  proof of   \eqref{2.4} is complete.
\end{proof}

\begin{remark}
\label{R3}
  In Theorem \ref{T3}, if we choose $h(t)=t^s$, $s\in (0,1],$  then  \eqref{2.4}
reduces to \eqref{1.8}.
\end{remark}

\section{Simpson type inequalities for $(\alpha,m)-$convex functions}

We use the following modified identity:

\begin{lemma} \cite[Lemma 2]{oak2011}
\label{lem 2.1} Let $f:I\rightarrow
\mathbb{R}
$ be a function such that $f'''$be absolutely
continuous on $I^{\circ }$, the interior of I. Assume that $a,b\in I^{\circ
},$ with $a<b$, $m\in (0, 1]$ and $f'''\in L_1[a,b].$ Then, the
following equality holds:%
\begin{align*}
 &\int_{a}^{mb}f(x)dx-\frac{mb-a}{6}\left[ f(a)+4f\left( \frac{a+mb}{2}\right)
+f(mb)\right]\\
= &\left( mb-a\right) ^{4}\int_{0}^{1}p(t)f'''(ta+m(1-t)b)dt,
\end{align*}%
where%
\begin{equation*}
p(t)=\left\{
\begin{array}{ll}
\frac{1}{6}t^{2}\left( t-\frac{1}{2}\right), &t\in \lbrack 0,%
\frac{1}{2}], \medskip
\\
\frac{1}{6}(t-1)^{2}\left( t-\frac{1}{2}\right), &t\in (\frac{1}{2%
}, 1].
\end{array}%
\right.
\end{equation*}
\end{lemma}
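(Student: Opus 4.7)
The strategy is the same as in the proof of Lemma~\ref{L1}: evaluate the right-hand side directly by iterated integration by parts, after absorbing the affine substitution into an auxiliary function. Concretely, I would set $g(t):=f(ta+m(1-t)b)$, so the chain rule gives $g'''(t)=(a-mb)^{3}f'''(ta+m(1-t)b)$, and hence
\begin{equation*}
(mb-a)^{4}\int_{0}^{1}p(t)\,f'''(ta+m(1-t)b)\,dt=-(mb-a)\int_{0}^{1}p(t)\,g'''(t)\,dt,
\end{equation*}
using $(mb-a)^{4}/(a-mb)^{3}=-(mb-a)$. It therefore suffices to show that $\int_{0}^{1}p(t)g'''(t)\,dt$ equals $\frac{1}{6}[f(a)+4f(\frac{a+mb}{2})+f(mb)]-\frac{1}{mb-a}\int_{a}^{mb}f(x)\,dx$.

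Next I would split at $t=1/2$ and apply integration by parts three times on each subinterval. Writing $p_{1}(t)=\frac{1}{6}t^{2}(t-\frac{1}{2})$ on $[0,\frac{1}{2}]$ and $p_{2}(t)=\frac{1}{6}(t-1)^{2}(t-\frac{1}{2})$ on $[\frac{1}{2},1]$, a direct differentiation yields $p_{1}'''\equiv p_{2}'''\equiv 1$, so the surviving interior integral collapses to $\int_{0}^{1}g(t)\,dt$; the substitution $x=ta+m(1-t)b$ turns this into $\frac{1}{mb-a}\int_{a}^{mb}f(x)\,dx$. For the boundary terms I would record the values $p_{1}(0)=p_{1}(\frac{1}{2})=p_{2}(\frac{1}{2})=p_{2}(1)=0$, $p_{1}'(0)=p_{2}'(1)=0$, $p_{1}'(\frac{1}{2})=p_{2}'(\frac{1}{2})=\frac{1}{24}$, $p_{1}''(0)=-\frac{1}{6}$, $p_{1}''(\frac{1}{2})=\frac{1}{3}$, $p_{2}''(\frac{1}{2})=-\frac{1}{3}$, $p_{2}''(1)=\frac{1}{6}$.

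With these values the $g''$ boundary contributions vanish outright, the $g'(\frac{1}{2})$ contributions from the two pieces cancel because $-p_{1}'(\frac{1}{2})$ and $+p_{2}'(\frac{1}{2})$ are opposites, and the remaining $g$-contributions combine (using $g(0)=f(mb)$, $g(\frac{1}{2})=f(\frac{a+mb}{2})$, $g(1)=f(a)$) to $\frac{1}{6}g(0)+\frac{2}{3}g(\frac{1}{2})+\frac{1}{6}g(1)=\frac{1}{6}[f(a)+4f(\frac{a+mb}{2})+f(mb)]$. Multiplying by $-(mb-a)$ then reproduces the LHS. The main obstacle is purely a sign check at the interior node $t=\frac{1}{2}$: because $p_{2}''(\frac{1}{2})=-\frac{1}{3}$ enters the second piece with a minus sign (as the lower endpoint of $p''g\big|_{1/2}^{1}$), it combines constructively with $p_{1}''(\frac{1}{2})=\frac{1}{3}$ to produce the midpoint coefficient $\frac{2}{3}$, which is precisely what generates the Simpson weight $4$; any error here would either kill the midpoint term or double the end terms.
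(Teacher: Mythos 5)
The paper offers no proof of this lemma at all---it is simply cited from \cite[Lemma 2]{oak2011}---and your triple integration by parts is the standard argument behind it; I have checked it and it is correct. All the recorded values of $p_{1},p_{2}$ and their derivatives at $0,\tfrac12,1$ are right, the sign bookkeeping works ($(mb-a)^{4}/(a-mb)^{3}=-(mb-a)$, the $g''$ terms vanish, the $g'(\tfrac12)$ terms cancel, the $g$ terms give $\tfrac16 g(0)+\tfrac23 g(\tfrac12)+\tfrac16 g(1)$ with $g(0)=f(mb)$, $g(1)=f(a)$, and the interior term is $-\int_{0}^{1}g\,dt=-\tfrac{1}{mb-a}\int_{a}^{mb}f(x)\,dx$), so multiplying by $-(mb-a)$ reproduces the claimed identity with no gaps.
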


Using this lemma, we can obtain the following  inequalities for $(\alpha,m)-$convex functions.

\begin{theorem}
  \label{T4} Let $f:I\subset \lbrack 0,b^{\ast }]\rightarrow
\mathbb{R}
,$ be a differentiable function on $I^{\circ }$ such that $f'''\in L_1[a,b]$ where $a,b\in I$ with $a<b,$ $b^{\ast }>0.$ If $%
\left| f'''\right| ^{q}$ is $(\alpha,m)-$convex
on $[a,b]$ for $(\alpha ,m)\in \lbrack
0,1]^{2},$ $q>1$ with $\frac{1}{p}+\frac{1}{q}=1,$ then
\begin{align}
&\left| \int_{a}^{mb}f(x)dx-\frac{mb-a}{6}\left[ f(a)+4f\left( \frac{a+mb}{2}\right)
+f(mb)\right] \right| \notag\\
\leq &\frac{\left(m b-a\right) ^{4}}{96} \left( \frac{\Gamma (2p+1)\Gamma (p+1)%
}{ \Gamma (3p+2)}\right) ^{\frac{1}{p}}  \left\{ \left[\frac{\left| f'''(a)\right|
^{q}+m[2^\alpha(1+\alpha)-1]\left| f'''(b)\right| ^{q}}{2^{\alpha}(1+\alpha)} \right] ^{%
\frac{1}{q}}\right. \notag\\
&\left. +\left[ \frac{(2^{1+\alpha}-1)\left| f'''(a)\right|
^{q}+m[2^{\alpha}(1+\alpha)-(2^{1+\alpha}-1)]\left| f'''(b)\right| ^{q}}{2^{\alpha}(1+\alpha)} \right] ^{\frac{1}{q}%
}\right\}.   \label{3.1}
\end{align}
\end{theorem}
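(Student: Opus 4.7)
The proof will parallel the proof of Theorem \ref{T2}: Lemma \ref{L1} is swapped for Lemma \ref{lem 2.1}, the variable of integration in $f'''$ becomes $ta+m(1-t)b$, and $h$-convexity is replaced by $(\alpha,m)$-convexity. Starting from
\[
\int_a^{mb} f(x)\,dx - \frac{mb-a}{6}\!\left[f(a)+4f\!\left(\tfrac{a+mb}{2}\right)+f(mb)\right]
 = (mb-a)^4\!\int_0^1 p(t)\,f'''(ta+m(1-t)b)\,dt,
\]
I would take absolute values, split the integral at $t=1/2$, and apply Hölder's inequality with exponents $p,q$ on each half. The two $p$-integrals of $|p(t)|^p$ are exactly those evaluated in \eqref{2.3'}, contributing a common factor $\Gamma(2p+1)\Gamma(p+1)/(2^{3p+1}\Gamma(3p+2))^{1/p}$ times $(mb-a)^4/6$.

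Next, applying $(\alpha,m)$-convexity of $|f'''|^q$ with $x=a$ and $y=b$ gives
\[
|f'''(ta+m(1-t)b)|^q \leq t^\alpha |f'''(a)|^q + m(1-t^\alpha)|f'''(b)|^q.
\]
Integrating and using $\int_0^{1/2} t^\alpha\,dt = \frac{1}{2^{\alpha+1}(\alpha+1)}$ together with $\int_0^{1/2}(1-t^\alpha)\,dt = \frac{2^{\alpha}(\alpha+1)-1}{2^{\alpha+1}(\alpha+1)}$ yields
\[
\int_0^{1/2}|f'''(ta+m(1-t)b)|^q\,dt \leq \frac{|f'''(a)|^q + m[2^{\alpha}(1+\alpha)-1]|f'''(b)|^q}{2^{\alpha+1}(1+\alpha)},
\]
and the analogous computation on $[1/2,1]$, using $\int_{1/2}^{1} t^\alpha\,dt = \frac{2^{\alpha+1}-1}{2^{\alpha+1}(\alpha+1)}$, produces the second bracketed expression in \eqref{3.1}.

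Finally, I would collect the constants. Pulling out a factor $(1/2)^{1/q}$ from each of the two $q$-th roots turns the denominator $2^{\alpha+1}(1+\alpha)$ inside the brackets into $2^{\alpha}(1+\alpha)$, matching the form in \eqref{3.1}. The remaining prefactor becomes
\[
\frac{(mb-a)^4}{6}\cdot 2^{-3-1/p}\cdot \Bigl(\tfrac12\Bigr)^{1/q} \left(\tfrac{\Gamma(2p+1)\Gamma(p+1)}{\Gamma(3p+2)}\right)^{1/p}
 = \frac{(mb-a)^4}{96}\left(\tfrac{\Gamma(2p+1)\Gamma(p+1)}{\Gamma(3p+2)}\right)^{1/p},
\]
since $2^{1/p}\cdot 2^{1/q}=2$. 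The only genuine obstacle is this bookkeeping: one must carefully track where each power of $2$ appears so that the prefactor collapses to $1/96$ and the denominator inside the brackets is $2^{\alpha}(1+\alpha)$ rather than $2^{\alpha+1}(1+\alpha)$. Once the two elementary integrals of $t^\alpha$ are correctly evaluated, the rest is a direct substitution into the Hölder-bound template established in Theorem \ref{T2}.
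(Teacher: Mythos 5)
Your proposal is correct and follows essentially the same route as the paper: Lemma \ref{lem 2.1}, H\"older's inequality on the two halves with the $p$-integrals from \eqref{2.3'}, the $(\alpha,m)$-convexity bound on $|f'''(ta+m(1-t)b)|^q$, and the collection of powers of $2$ into the $1/96$ prefactor. Your bookkeeping of the factor $(1/2)^{1/q}$ is in fact slightly more careful than the paper's own write-up, which states the second half-interval integral with denominator $2^{\alpha}(1+\alpha)$ where $2^{1+\alpha}(1+\alpha)$ is meant.
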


\begin{proof}
From Lemma \ref{lem 2.1} and using H\"{o}lder's inequality we have%
\begin{align*}
&\left| \int_{a}^{mb}f(x)dx-\frac{mb-a}{6}\left[ f(a)+4f\left( \frac{a+mb%
}{2}\right) +f(mb)\right] \right| \\
\leq &\frac{\left( mb-a\right) ^{4}}{6}\left\{ \left( \int_{0}^{\frac{1}{2}%
}\left( t^{2}\left( \frac{1}{2}-t\right) \right) ^{p}dt\right) ^{\frac{1}{p}%
}\left( \int_{0}^{\frac{1}{2}}\left| f'''(ta+m(1-t)b)\right| ^{q}dt\right) ^{\frac{1}{q}}\right. \\
&\left. +\left( \int_{\frac{1}{2}}^{1}\left( (t-1)^{2}\left( t-\frac{1}{2}%
\right) \right) ^{p}dt\right) ^{\frac{1}{p}}\left( \int_{\frac{1}{2}%
}^{1}\left| f'''(ta+m(1-t)b)\right|
^{q}dt\right) ^{\frac{1}{q}}\right\} .
\end{align*}%
Due to the $(\alpha,m)-$convexity of $\left| f'''\right| ^{q},$ we have%
\begin{align*}
\int_{0}^{\frac{1}{2}}\left| f'''(ta+m(1-t)b)\right| ^{q}dt \leq &\int_{0}^{\frac{1}{2}}\left[
t^\alpha\left| f'''(a)\right|
^{q}+m(1-t^\alpha)\left| f'''(b)\right| ^{q}%
\right] dt \\
=&\frac{\left| f'''(a)\right|
^{q}+m[2^\alpha(1+\alpha)-1]\left| f'''(b)\right| ^{q}}{2^{1+\alpha}(1+\alpha)}
\end{align*}%
and
\begin{align*}
\int_{\frac{1}{2}}^{1}\left| f'''(ta+m(1-t)b)\right| ^{q}dt \leq &\int_{\frac{1}{2}}^{1}\left[
t^\alpha\left| f'''(a)\right|
^{q}+m(1-t^\alpha)\left| f'''(b)\right| ^{q}%
\right] dt \\
=&\frac{(2^{1+\alpha}-1)\left| f'''(a)\right|
^{q}+m[2^{\alpha}(1+\alpha)-(2^{1+\alpha}-1)]\left| f'''(b)\right| ^{q}}{2^{\alpha}(1+\alpha)}.
\end{align*}%
The proof of \eqref{3.1} is complete by combining the above inequalities and \eqref{2.3'}.
\end{proof}

\begin{remark}
\label{rem 2.1} In Theorem \ref{T4}, if we choose $\alpha=1,$ we get the
inequality in  \cite[Theorem 4]{oak2011}.
\end{remark}

\begin{theorem}
\label{T5} Let the assumptions of Theorem \ref{T4} hold with $%
q\geq 1.$ Then
\begin{align}
&\left| \int_{a}^{mb}f(x)dx-\frac{mb-a}{6}\left[ f(a)+4f\left( \frac{a+mb%
}{2}\right) +f(mb)\right] \right| \notag\\
\leq &\frac{\left( mb-a\right) ^{4}}{1152}\left\{ \left( \frac{12\left|
f'''(a)\right| ^{q}+m[2^\alpha(3+\alpha)(4+\alpha)-12]\left| f'''(b)\right| ^{q}}{2^\alpha(3+\alpha)(4+\alpha)}\right) ^{\frac{1}{q}}\right.\notag\\
&\left.+\left(\frac{12[\alpha^2+11\alpha+34-2^{4+\alpha}(2-\alpha)]}{2^\alpha(1+\alpha)(2+\alpha)(3+\alpha)(4+\alpha)} \left| f'''(a)\right| ^{q} \right.\right. \notag\\
&\left.\left.+ m\left[1-\frac{12[\alpha^2+11\alpha+34-2^{4+\alpha}(2-\alpha)]}{2^\alpha(1+\alpha)(2+\alpha)(3+\alpha)(4+\alpha)}\right]\left| f'''(b)\right| ^{q}  \right)
^{\frac{1}{q}}\right\}.
\end{align}
\end{theorem}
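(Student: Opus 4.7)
The plan is to follow the same template used for Theorem \ref{T3}, but with the modified identity of Lemma \ref{lem 2.1} in place of Lemma \ref{L1}, and with $(\alpha,m)$-convexity in place of $h$-convexity. Starting from Lemma \ref{lem 2.1} I would split the integral at $t=1/2$ and apply the power-mean inequality on each piece with the non-negative weight $w_1(t)=t^{2}(\tfrac{1}{2}-t)$ on $[0,\tfrac12]$ and $w_2(t)=(t-1)^{2}(t-\tfrac12)$ on $[\tfrac12,1]$. The elementary calculation
\[
\int_{0}^{1/2} t^{2}\!\left(\tfrac12-t\right) dt \;=\; \int_{1/2}^{1} (t-1)^{2}\!\left(t-\tfrac12\right) dt \;=\; \frac{1}{192}
\]
produces the prefactor $(1/192)^{1-1/q}$, which combined with the $1/6$ from $p(t)$ yields exactly the outside constant $\frac{(mb-a)^{4}}{6}(1/192)^{1-1/q}$. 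Note that $1152=6\cdot 192$, so the statement's factor $1/1152$ is consistent with the $1/q$ power being absorbed into the bracketed terms (each integral $\int w_i \,dt = 1/192$ is factored out and distributed).

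Next, using $(\alpha,m)$-convexity of $|f'''|^{q}$, I would bound
\[
|f'''(ta+m(1-t)b)|^{q} \;\le\; t^{\alpha}|f'''(a)|^{q}+m(1-t^{\alpha})|f'''(b)|^{q},
\]
so that the two remaining weighted integrals reduce to computing
\[
I_{1}(\alpha)=\int_{0}^{1/2} t^{2+\alpha}\!\left(\tfrac12-t\right) dt,\qquad J_{1}(\alpha)=\int_{1/2}^{1} (t-1)^{2}\!\left(t-\tfrac12\right) t^{\alpha}\,dt,
\]
and their complements against $\int w_i\,dt = 1/192$.

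The first integral is a direct beta-function-type computation:
\[
I_{1}(\alpha)=\frac{1}{2^{4+\alpha}(3+\alpha)(4+\alpha)},
\]
and multiplying through by $192$ yields the coefficient $\frac{12}{2^{\alpha}(3+\alpha)(4+\alpha)}$ that appears in front of $|f'''(a)|^{q}$ in the first bracket (with the complementary factor providing the coefficient of $m|f'''(b)|^{q}$). The main obstacle is evaluating $J_{1}(\alpha)$: expanding $(t-1)^{2}(t-\tfrac12)=t^{3}-\tfrac{5}{2}t^{2}+2t-\tfrac12$, integrating termwise against $t^{\alpha}$, evaluating at $1$ and $1/2$, and combining over the common denominator $(1+\alpha)(2+\alpha)(3+\alpha)(4+\alpha)$. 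After cancellation the cubic and quadratic terms in $\alpha$ vanish from the $t=1$ piece, leaving
\[
J_{1}(\alpha)=\frac{2^{-4-\alpha}\bigl[\alpha^{2}+11\alpha+34-2^{4+\alpha}(2-\alpha)\bigr]}{(1+\alpha)(2+\alpha)(3+\alpha)(4+\alpha)},
\]
so $192\,J_{1}(\alpha)$ matches the bracketed coefficient in the second half of the stated bound.

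Assembling the two pieces, and using $1/192-I_{1}$ and $1/192-J_{1}$ to read off the coefficients of $m|f'''(b)|^{q}$, produces the claimed inequality. The algebra of $J_{1}(\alpha)$ is the sole non-routine step; everything else is a direct transcription of the argument of Theorem \ref{T3} to the $(\alpha,m)$-setting on $[a,mb]$. As a consistency check, specializing to $\alpha=1$ should recover \cite[Theorem 5]{oak2011}, in the same manner that Remark \ref{rem 2.1} connects Theorem \ref{T4} to \cite[Theorem 4]{oak2011}.
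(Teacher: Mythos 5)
Your proposal is correct and follows essentially the same route as the paper's own proof: apply Lemma \ref{lem 2.1}, split at $t=\tfrac12$ and use the power-mean inequality with the weights $t^{2}(\tfrac12-t)$ and $(t-1)^{2}(t-\tfrac12)$, invoke $(\alpha,m)$-convexity of $|f'''|^{q}$, and evaluate the same four weighted integrals (your values of $I_{1}(\alpha)$ and $J_{1}(\alpha)$ agree exactly with the ones the paper records). Nothing further is needed.
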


\begin{proof}
From Lemma \ref{lem 2.1}, using the well known power-mean inequality and $(\alpha,m)- $convexity of $\left| f'''\right|
^{q}$, we have
\begin{align*}
&\left| \int_{a}^{mb}f(x)dx-\frac{mb-a}{6}\left[ f(a)+4f\left( \frac{a+mb%
}{2}\right) +f(mb)\right] \right| \\
\leq &\frac{\left( b-a\right) ^{4}}{6}\left\{ \left( \int_{0}^{\frac{1}{2}%
}t^{2}\left( \frac{1}{2}-t\right) dt\right) ^{1-\frac{1}{q}}\left( \int_{0}^{%
\frac{1}{2}}t^{2}\left( \frac{1}{2}-t\right) \left| f'''(ta+m(1-t)b)\right| ^{q}dt\right) ^{\frac{1}{q}}\right.
\\
&\left. +\left( \int_{\frac{1}{2}}^{1}(t-1)^{2}\left( t-\frac{1}{2}\right)
dt\right) ^{1-\frac{1}{q}}\left( \int_{\frac{1}{2}}^{1}(t-1)^{2}\left( t-%
\frac{1}{2}\right) \left| f'''(ta+m(1-t)b)\right| ^{q}dt\right) ^{\frac{1}{q}}\right\} \\
\leq &\frac{\left( b-a\right) ^{4}}{6}\left\{ \left( \int_{0}^{\frac{1}{2}%
}t^{2}\left( \frac{1}{2}-t\right) dt\right) ^{1-\frac{1}{q}}\left( \int_{0}^{%
\frac{1}{2}}t^{2}\left( \frac{1}{2}-t\right) \left[ t^\alpha\left| f'''(a)\right| ^{q}+m(1-t^\alpha)\left| f'''(b)\right| ^{q}\right] dt\right) ^{\frac{1}{q}}\right. \\
&\left. +\left( \int_{\frac{1}{2}}^{1}(t-1)^{2}\left( t-\frac{1}{2}\right)
dt\right) ^{1-\frac{1}{q}}\left( \int_{\frac{1}{2}}^{1}(t-1)^{2}\left( t-%
\frac{1}{2}\right) \left[ t^\alpha\left| f'''(a)\right| ^{q}+m(1-t^\alpha)\left| f'''(b)\right| ^{q}\right] dt\right) ^{\frac{1}{q}}\right\} .
\end{align*}%
By using the fact that
$$\int_{0}^{\frac{1}{2}}t^{2}\left( \frac{1}{2}-t\right)t^\alpha dt=\frac{1}{16\times 2^\alpha(3+\alpha)(4+\alpha)} ,$$
$$\int_{0}^{\frac{1}{2}}t^{2}\left( \frac{1}{2}-t\right)(1-t^\alpha) dt=\frac{2^\alpha(3+\alpha)(4+\alpha)-12}{192\times 2^\alpha(3+\alpha)(4+\alpha)} ,$$
$$\int_{\frac{1}{2}}^{1}(t-1)^{2}\left( t-\frac{1}{2}\right)t^\alpha dt=\frac{\alpha^2+11\alpha+34-2^{4+\alpha}(2-\alpha)}{16\times 2^\alpha(1+\alpha)(2+\alpha)(3+\alpha)(4+\alpha)}$$
and
$$\int_{\frac{1}{2}}^{1}(t-1)^{2}\left( t-\frac{1}{2}\right)(1-t^\alpha) dt=\frac{2^\alpha(1+\alpha)(2+\alpha)(3+\alpha)(4+\alpha)-12[\alpha^2+11\alpha+34-2^{4+\alpha}(2-\alpha)]}{192\times 2^\alpha(1+\alpha)(2+\alpha)(3+\alpha)(4+\alpha)} ,$$
we obtain%
\begin{align*}
&\left| \int_{a}^{mb}f(x)dx-\frac{mb-a}{6}\left[ f(a)+4f\left( \frac{a+mb%
}{2}\right) +f(mb)\right] \right| \\
\leq &\frac{\left( mb-a\right) ^{4}}{6}\left( \frac{1}{192}\right) ^{1-\frac{%
1}{q}}\left\{\left( \frac{12\left|
f'''(a)\right| ^{q}+m[2^\alpha(3+\alpha)(4+\alpha)-12]\left| f'''(b)\right| ^{q}}{192\times 2^\alpha(3+\alpha)(4+\alpha)}\right) ^{\frac{1}{q}%
}
\right. \\
&\left.+\left(\frac{\alpha^2+11\alpha+34-2^{4+\alpha}(2-\alpha)}{16\times 2^\alpha(1+\alpha)(2+\alpha)(3+\alpha)(4+\alpha)} \left| f'''(a)\right| ^{q} \right.\right. \\
&\left.\left.+ m\left[\frac{1}{192}-\frac{\alpha^2+11\alpha+34-2^{4+\alpha}(2-\alpha)}{16\times 2^\alpha(1+\alpha)(2+\alpha)(3+\alpha)(4+\alpha)}\right]\left| f'''(b)\right| ^{q}  \right)
^{\frac{1}{q}}\right\},
\end{align*}%
which implies the desired result.
\end{proof}

\begin{remark}
\label{rem 2.2} In Theorem \ref{T5}, if we choose $\alpha=1,$ we have the
inequality in \cite[Theorem 5]{oak2011}.
\end{remark}

\end{document}